\def\RR{\mathbb{R}}
\def\ZZ{\mathbb{Z}}
\newtheorem{lemma}{Lemma}
\newtheorem{proposition}{Proposition}
\newtheorem{theorem}{Theorem}
\newenvironment{proof}
 {\begin{trivlist} \item[] {\bf Proof.\ }}{\hfill$\Box$ \end{trivlist}}
\newenvironment{proofof}[1]%
 {\begin{trivlist} \item[] {\bf #1.\ }}{\hfill$\Box$ \end{trivlist}}
\begin{document}

\title{Compact Formulations of the Steiner Traveling Salesman Problem and
Related Problems}

\author{Adam N.\ Letchford\thanks{Corresponding author. Department of Management
Science, Lancaster University, Lancaster LA1 4YX, United Kingdom. E-mail:
{\tt A.N.Letchford@lancaster.ac.uk}} \and Saeideh D.\ Nasiri\thanks{Lancaster
University. E-mail: {\tt s.d.nasiri@lancaster.ac.uk}} \and Dirk Oliver
Theis\thanks{Faculty of Mathematics, Otto von Guericke University of
  Magdeburg, Germany.  E-mail: {\tt theis@ovgu.de}}}

\date{Sat Mar 17 12:04:35 CET 2012}

\maketitle

\begin{abstract}
The Steiner Traveling Salesman Problem (STSP) is a variant of the Traveling
Salesman Problem (TSP) that is particularly suitable when dealing with sparse
networks, such as road networks. The standard integer programming formulation
of the STSP has an exponential number of constraints, just like the standard
formulation of the TSP. On the other hand, there exist several known
{\em compact}\/ formulations of the TSP, i.e., formulations with a polynomial
number of both variables and constraints. In this paper, we show that some of
these compact formulations can be adapted to the STSP. We also briefly discuss
the adaptation of our formulations to some closely-related problems.
\\*[2mm]
{\bf Keywords:} traveling salesman problem, integer programming,
extended formulations.
\end{abstract}

\section{Introduction}

The {\em Traveling Salesman Problem}\/ (TSP), in its undirected version, can be
defined as follows. We are given a complete undirected graph $G=(V,E)$ and a
positive integer cost $c_e$ for each edge $e \in E$. The task is to find a
Hamiltonian circuit, or {\em tour}, of minimum total cost. The best algorithms
for solving the TSP to proven optimality, such as the ones described in
\cite{ABCC06,N02,PR91}, are based on a formulation of the TSP as a 0-1 linear
program due to Dantzig {\em et al.}\ \cite{DFJ54}, which we present in
Subsection \ref{sub:lit1} of this paper.

The Dantzig {\em et al.}\ formulation has only one variable per edge, but has
an exponentially-large number of constraints, which makes cutting-plane methods
necessary (see again \cite{ABCC06,N02,PR91}). If one wishes to avoid this
complication, one can instead use a so-called {\em compact}\/ formulation of
the TSP, i.e., a formulation with a polynomial number of both variables and
constraints. A variety of compact formulations are available (see the surveys
\cite{GV95,OAL09,OW07,PS91} and also Subsection \ref{sub:lit2} of this paper).

When dealing with routing problems on real-life road networks, however, one
often encounters the following variant of the TSP. The graph $G$ is not
complete, not every node must be visited by the salesman, nodes may be visited
more than once if desired, and edges may be traversed more than once if
desired. This variant of the TSP was proposed, apparently independently, by
three sets of authors \cite{CFN85,F85,O74}. (The special case in which all
nodes must be visited was considered earlier in \cite{HN62,MLN81}.) We will
follow Cornu\'ejols {\em et al.}\ \cite{CFN85} in calling this variant the
{\em Steiner}\/ TSP, or STSP for short.

As noted in \cite{CFN85,F85}, it is possible to convert any instance of the
STSP into an instance of the standard TSP, by computing shortest paths between
every pair of required nodes. So, in principle, one could use any of the
above-mentioned TSP formulations to solve the STSP. If, however, the original
STSP instance is defined on a sparse graph, the conversion to a standard TSP
instance increases the number of variables substantially, which may be
undesirable. For this reason, we have decided in this paper to present and
analyse some compact formulations for the STSP.

The paper is structured as follows. We review the relevant literature on TSP
and STSP formulations in Section \ref{se:lit}. In Section \ref{se:flow}, we
show how to adapt so-called {\em commodity-flow}\/ formulations of the TSP to
the Steiner case, and make some remarks about the relative strength of the
resulting formulations. In Section \ref{se:time}, we adapt the so-called
{\em time-staged}\/ formulation of the TSP to the Steiner case, and present
a key theorem, which enables one to reduce the number of variables
substantially. Then, in Section \ref{se:other}, we briefly discuss the
possibility of adapting our compact formulations to some other vehicle
routing problems, when sparse graphs are involved rather than complete graphs.
Finally, some concluding remarks appear in Section
\ref{se:conclusion}.

\section{Literature Review} \label{se:lit}

We now review the relevant literature. We cover the classical formulation
of the standard TSP in Subsection \ref{sub:lit1}, compact formulations of the
standard TSP in Subsection \ref{sub:lit2}, and the classical formulation of the
STSP in Subsection \ref{sub:lit3}.

\subsection{The classical formulation of the standard TSP} \label{sub:lit1}

The classical and most commonly-used formulation of the standard TSP is the
following one, due to Dantzig, Fulkerson and Johnson \cite{DFJ54}:
\begin{eqnarray}
\nonumber
\min           & \sum_{e \in E} c_e x_e             &\\
\label{eq:deg}
\mbox{s.t.}    & \sum_{e \in \delta(\{i\})} x_e = 2 & (\forall i \in V) \\
\label{eq:SEC} & \sum_{e \in \delta(S)} x_e \ge 2   & (\forall S \subseteq V: 2 \le |S| \le |V|/2)\\
\nonumber      & x_e \in\{0,1\}                     & (\forall e \in E).
\end{eqnarray}
Here, $x_e$ is a binary variable, taking the value $1$ if and only if the
edge $e$ belongs to the tour, and, for any $S \subset V$, $\delta(S)$ denotes
the set of edges having exactly one end-node inside $S$. The constraints
(\ref{eq:deg}), called {\em degree}\/ constraints, enforce that the tour uses
exactly two of the edges incident on each node. The constraints (\ref{eq:SEC}),
called {\em subtour elimination constraints}, ensure that the tour is connected.

We will call this formulation the {\em DFJ}\/ formulation. A key feature of this
formulation is that the subtour elimination constraints (\ref{eq:SEC}) are exponential
in number.

\subsection{Compact formulations of the standard TSP} \label{sub:lit2}

As mentioned above, a wide variety of compact formulations exist for the
standard TSP, and there are several surveys available (e.g.,
\cite{GV95,OAL09,OW07,PS91}). For the sake of brevity, we mention here only
four of them. All of them start by setting $V=\{1,2,\ldots n\}$ and viewing
node $1$ as a `depot', which the salesman must leave at the start of the tour
and return to at the end of the tour. Moreover, all of them can be used for
the asymmetric TSP as well as for the standard (symmetric) TSP.

We begin with the formulation of Miller, Tucker \& Zemlin \cite{MTZ60}, which
we call the {\em MTZ}\/ formulation. For all node pairs $(i,j)$, let
${\tilde x}_{ij}$ be a binary variable, taking the value $1$ if and only if
the salesman travels from node $i$ to node $j$. Also, for $i = 2, \ldots, n$,
let $u_i$ be a continuous variable representing the position of node $i$ in
the tour. (The depot can be thought of as being at positions $0$ and $n$.)
The MTZ formulation is then:
\begin{eqnarray}
\label{eq:dirobj}
\min               & \sum_{i,j=1}^n c_{ij} {\tilde x}_{ij}     & \\
\label{eq:assign1}
\mbox{s.t.}        & \sum_{j=1}^n {\tilde x}_{ji}=1            & (1 \le i \le n) \\
\label{eq:assign2} & \sum_{j=1}^n {\tilde x}_{ij}=1            & (1 \le i \le n) \\
\label{eq:dirbin}  & {\tilde x}_{ij} \in \{0,1\}               & (1 \le i,j \le n; i \ne j)  \\
\label{eq:MTZ1}    & u_i - u_j + (n-1) {\tilde x}_{ij} \le n-2 & (2 \le i,j \le n; i \ne j)\\
\label{eq:MTZ2}    & 1 \le u_i \le n-1                         & (2 \le i \le n).
\end{eqnarray}
The constraints (\ref{eq:assign1}) and (\ref{eq:assign2}) ensure that the salesman
arrives at and departs from each node exactly once. The constraints (\ref{eq:MTZ1})
ensure that, if the salesman travels from $i$ to $j$, then the position of node $j$
is one more than that of node $i$. Together with the bounds (\ref{eq:MTZ2}), this
ensures that each non-depot node is in a unique position.
 
The MTZ formulation is compact, having only ${\cal O}(n^2)$ variables and
${\cal O}(n^2)$ constraints. Unfortunately, Padberg \& Sung \cite{PS91} show
that its LP relaxation yields an extremely weak lower bound, much weaker than that
of the DFJ formulation.

The next compact formulation, historically, was the `time-staged' (TS)
formulation proposed by both Vajda \cite{V61} and Houck {\em et al.} \cite{HPQV80} 
independently. For all $1 \le i,j,k \le n$ with $i \ne j$, let $r_{ij}^k$ be 
a binary variable taking the value $1$ if and only if the edge $\{i,j\}$ is 
the $k$th edge to be traversed in the tour, and is traversed in the direction 
going from $i$ to $j$. We then have:
\begin{eqnarray}
\nonumber
\min              & \sum_{i=2}^n c_{1i} r_{1i}^1 + \sum_{k=2}^{n-1} \sum_{i,j=2}^{n}
                    c_{ij} r_{ij}^k + \sum_{i=2}^n c_{i1} r_{i1}^{n} & \\
\mbox{s.t.}       & \hspace{-1cm} \sum_{j=2}^n r_{1j}^1 = 1 & \label{eq:out1}\\
\label{eq:in1}    & \hspace{-1cm} \sum_{j=2}^n r_{j1}^n = 1 & \\
\label{eq:ini}    & \hspace{-1cm} \sum_{k=1}^{n-1} \sum_{j \ne i} r_{ji}^k = 1
                  & \hspace{-2.5cm} (2 \le i \le n) \\
\label{eq:inouti} & \hspace{-1cm}\sum_{j \ne i} r_{ji}^k = \sum_{j \ne i} r_{ij}^{k+1}
                  & \hspace{-2.5cm} (2 \le i \le n; 1 \le k \le n-1)\\
\nonumber         & \hspace{-1cm}r_{ij}^k \in \{0,1\} & \hspace{-2.5cm} (1 \le i,j,k \le n; i \ne j).
\end{eqnarray}
The constraints (\ref{eq:out1}) and (\ref{eq:in1}) state that the salesman must
leave the depot at the start of the tour and return to it at the end. The constraints
(\ref{eq:ini}) ensure that the salesman arrives at each non-depot node exactly once,
and the constraints (\ref{eq:inouti}) ensure that the salesman departs from each node
that he visits.

The TS formulation has ${\cal O}(n^3)$ variables and ${\cal O}(n^2)$ constraints.
It follows from results in \cite{GV95,PS91} that the associated lower bound is
intermediate in strength between the MTZ and DFJ bounds.

Next, we mention the {\em single-commodity flow}\/ (SCF) formulation of Gavish
\& Graves \cite{GG78}. Imagine that the salesman carries $n-1$ units of a
commodity when he leaves node $1$, and delivers $1$ unit of this commodity to
each other node. Let the ${\tilde x}_{ij}$ variables be defined as above, and
define additional continuous variables $g_{ij}$, representing the amount of the
commodity (if any) passing directly from node $i$ to node $j$. The formulation
then consists of the objective function (\ref{eq:dirobj}), the constraints
(\ref{eq:assign1})--(\ref{eq:dirbin}), and the following constraints:
\begin{eqnarray}
\label{eq:gflow}   & \sum_{j=1}^n g_{ji} - \sum_{j=2}^n g_{ij} = 1 & (2 \le i \le n)\\
\label{eq:gbounds} & 0 \le g_{ij} \le (n-1) {\tilde x}_{ij} & (1 \le i,j \le n; j \ne i).
\end{eqnarray}
The constraints (\ref{eq:gflow}) ensure that one unit of the commodity is delivered
to each non-depot node. The bounds (\ref{eq:gbounds}) ensure that the commodity
can flow only along edges that are in the tour.

The SCF formulation has ${\cal O}(n^2)$ variables and ${\cal O}(n)$ constraints.
It is proved in \cite{PS91} that the associated lower bound is intermediate in
strength between the MTZ and DFJ bounds. Later on, in \cite{GV95}, it was shown
that it is in fact intermediate in strength between the MTZ and TS bounds.

Finally, we mention the {\em multi-commodity flow}\/ (MCF) formulation of Claus
\cite{C84}. Here, we imagine that the salesman carries $n-1$ commodities, one
unit of each for each customer. Let the ${\tilde x}_{ij}$ variables be defined
as above. Also define, for all $1 \le i, j \le n$ with $i \ne j$ and all
$2 \le k \le n$, the additional continuous variable $f_{ij}^k$, representing the
amount of the $k$th commodity (if any) passing directly from node $i$ to node $j$.
The formulation then consists of the objective function (\ref{eq:dirobj}), the
constraints (\ref{eq:assign1})--(\ref{eq:dirbin}), and the following constraints:
\begin{eqnarray}
\label{eq:MCF1}
 &  0 \le f_{ij}^k \le {\tilde x}_{ij} & (k = 2, \ldots, n; \{i,j\} \subset \{1, \ldots, n\})\\
\label{eq:MCF2}
  & \sum_{i=2}^n f_{1i}^k = 1 & (k = 2, \ldots, n)\\
\label{eq:MCF3}
  & \sum_{i=1}^n f_{ik}^k = 1 & (k = 2, \ldots, n)\\
\label{eq:MCF4}
  & \sum_{i=1}^n f_{ij}^k - \sum_{i=2}^n f_{ji}^k = 0 & (k = 2,\ldots,n; j \in \{2, \ldots, n\} \setminus \{k\}).
\end{eqnarray}
The constraints (\ref{eq:MCF1}) state that a commodity cannot flow along an
edge unless that edge belongs to the tour. The constraints (\ref{eq:MCF2}) and
(\ref{eq:MCF3}) impose that each commodity leaves the depot and arrives at its
destination. The constraints (\ref{eq:MCF4}) ensure that, when a commodity
arrives at a node that is not its final destination, then it also leaves that
node.

The MCF formulation has ${\cal O}(n^3)$ variables and ${\cal O}(n^3)$
constraints. It is proved in \cite{PS91} that the associated lower bound is
equal to the DFJ bound. Therefore, this is the strongest of the four compact
formulations mentioned.

\subsection{The classical formulation of the STSP} \label{sub:lit3}

In the STSP, $G=(V,E)$ is permitted to be a general graph, and a set
$V_R \subset V$ of {\em required nodes}\/ is specified. The formulation given
in \cite{F85} is as follows:
\begin{eqnarray}
\label{eq:stsp-obj}
\min              & \sum_{e \in E} c_e x_e & \\
\label{eq:con}
\mbox{s.t.}       & \sum_{e \in \delta(S)} x_e \ge 2 & 
    (S \subset V: S \cap V_R \ne \emptyset, V_R \setminus S \ne \emptyset)\\
\label{eq:parity} & \sum_{e \in \delta(i)} x_e \mbox{ even} & (i \in V)\\
\label{eq:genint} & x_e \in \ZZ_+ & (e \in E).
\end{eqnarray}
Note that the $x$ variables are now general-integer variables. Note also that the
parity conditions (\ref{eq:parity}) are non-linear. (They can be easily linearised,
using one additional variable for each node.) The crucial point, however, is that
there are an exponential number of the connectivity constraints (\ref{eq:con}).

\section{Flow-Based Formulations of the STSP} \label{se:flow}

In this section, we adapt the formulations SCF and MCF, mentioned in Subsection
\ref{sub:lit2}, to the Steiner case. We also give some results concerned with
the strength of the LP relaxations of our formulations.

\subsection{Some notation and a useful lemma} \label{sub:notation}

At this point, we present some additional notation. Let ${\tilde G} = (V,A)$
be a directed graph, where the set of directed arcs $A$ is obtained from the
edge set $E$ by replacing each edge $\{i,j\}$ with two directed arcs $(i,j)$ and
$(j,i)$. For each arc $a \in A$, the cost $c_a$ is viewed as being equal to the
cost of the corresponding edge. For any node set $S \subset V$, let $\delta^+(S)$
denote the set of arcs in $A$ whose tail is in $S$ and whose head is in
$V \setminus S$, and let $\delta^-(S)$ denote the set of arcs in $A$ for which
the reverse holds. For readability, we write $\delta^+(i)$ and $\delta^-(i)$ in
place of $\delta^+(\{i\})$ and $\delta^-(\{i\})$, respectively. Finally, let
$n_R = |V_R|$ denote the number of required nodes.

We will find the following lemma useful:
\begin{lemma} \label{le:once}
In an optimal solution to the STSP, no edge will be traversed more
than once in either direction.
\end{lemma}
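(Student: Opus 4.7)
The plan is a simple swap argument: I realize an optimal STSP solution as a closed walk and show that if any directed arc were traversed twice, the walk could be rerouted into a strictly cheaper feasible one, contradicting optimality.

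First, any feasible $x$ can be realized as such a walk, since by (\ref{eq:parity}) every node has even degree in the multigraph given by $x$, and by (\ref{eq:con}) all of $V_R$ lies in a single connected component of that multigraph; that component is Eulerian and therefore admits a closed walk $W$ visiting every required node. Now suppose, for contradiction, that some optimal $x$ is realized by a walk $W$ in which a directed arc $(u,v)\in A$ appears at least twice. Splitting $W$ at two of these occurrences gives
\begin{equation*}
    W \;=\; W_1 \cdot (u,v) \cdot W_2 \cdot (u,v) \cdot W_3,
\end{equation*}
where $W_2$ is the subwalk running from $v$ back to $u$ in between.

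The key step is to replace $W$ by
\begin{equation*}
    W' \;=\; W_1 \cdot \overline{W_2} \cdot W_3,
\end{equation*}
where $\overline{W_2}$ traverses the same edges as $W_2$ but in reverse order, so that $\overline{W_2}$ runs from $u$ to $v$ and $W'$ is again a closed walk at the same basepoint as $W$. Its edge multiset $x'$ agrees with $x$ everywhere except that $x'_e = x_e-2$ for the undirected edge $e=\{u,v\}$. Because $W'$ is itself a single closed walk, it automatically satisfies the parity constraints (\ref{eq:parity}) and the connectivity constraints (\ref{eq:con}), and since $c_e\ge 1$ the cost strictly drops by $2c_e$, contradicting optimality. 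This rules out any arc being used twice, and hence (by pigeonholing the $x_e$ traversals of $e$ between its two arcs) it also rules out $x_e\ge 3$.

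The main technical care lies in checking that $W'$ really is feasible, above all that every required node is still visited. Every node met by the middle block $(u,v)\cdot W_2\cdot(u,v)$ of $W$ is still met by $\overline{W_2}$ in $W'$, and $u$ and $v$ themselves survive as the join points between $W_1$ and $\overline{W_2}$ and between $\overline{W_2}$ and $W_3$. Once this check is in place, the positivity of the edge costs produces the required contradiction.
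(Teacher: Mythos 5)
Your proof is correct. There is actually nothing in the paper to compare it against: the authors state Lemma~\ref{le:once} without proof, calling it folklore and deferring to the appendix of \cite{LO09}. Your argument --- realizing the optimal solution as a closed Eulerian walk through all of $V_R$ (justified via the parity and connectivity constraints), and, if some arc $(u,v)$ occurred twice, replacing $W_1\cdot(u,v)\cdot W_2\cdot(u,v)\cdot W_3$ by $W_1\cdot\overline{W_2}\cdot W_3$, which preserves parity, connectivity and the set of visited nodes while lowering the cost by $2c_{uv}>0$ --- is exactly the standard folklore reversal proof; it correctly exploits the positivity of the edge costs (without which the statement, asserted for \emph{every} optimal solution, would be false), and your closing pigeonhole remark properly extends the conclusion from ``no arc used twice'' to ruling out $x_e\ge 3$, which is what the lemma's phrasing requires.
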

This lemma is part of the folklore, but an explicit proof can be found in the
appendix of \cite{LO09}.

Using this fact, one can define a binary variable ${\tilde x}_a$ for each arc
$a \in A$, taking the value $1$ if and only if the salesman travels along $a$.

\subsection{An initial single-commodity flow formulation} \label{sub:newSCF1}

Without loss of generality, assume that node $1$ is required. By analogy with
the case of the standard TSP, we imagine that the salesman departs the depot with
$n_R-1$ units of the commodity, and delivers one unit of that commodity to each
required node. So, for each arc $a \in A$, let the new variable $g_a$ represent
the amount of the commodity passing through $a$. The single-commodity flow
formulation (SCF) may then be adapted to the sparse graph setting as follows:
\begin{eqnarray}
\label{eq:sparseSCFobj}
\min        & \hspace{-3.0cm} \sum_{a \in A} c_a {\tilde x}_a \\
\mbox{s.t.} & \label{eq:Xbounds}\hspace{-1.8cm} \sum_{a\in\delta^+(i)} {\tilde x}_a \ge 1 & (\forall i\in V_R)\\
\label{eq:xFlow}
            & \sum_{a \in \delta^+(i)} {\tilde x}_a = \sum_{a \in \delta^-(i)} {\tilde x}_a & (\forall i \in V)\\
\label{eq:gFlowReq}
            & \hspace{.6cm} \sum_{a \in \delta^-(i)} g_a - \sum_{a \in \delta^+(i)} g_a = 1 & (\forall i \in V_R \setminus \{1\}) \\
\label{eq:gFlowNonReq}
            & \hspace{.6cm} \sum_{a \in \delta^-(i)} g_a - \sum_{a \in \delta^+(i)} g_a = 0 & (\forall i \in V \setminus V_R)\\
\label{eq:gFlowBounds}
             & \hspace{-1cm} 0 \le g_a \le (n_R - 1){\tilde x}_a & (\forall a \in A)\\
\label{eq:sparseSCFbin}
            & {\tilde x}_a \in \{0,1\} & (\forall a \in A).
\end{eqnarray}
The constraints (\ref{eq:Xbounds}) ensure that the salesman departs from each required
node at least once, and the constraints (\ref{eq:xFlow}) ensure that the salesman departs
from each node as many times as he arrives.  The constraints (\ref{eq:gFlowReq}) impose
that one unit of the commodity is delivered to each required node, and the constraints
(\ref{eq:gFlowNonReq}) ensure that the amount of commodity on board when leaving a
non-required node is equal to the amount when arriving. The bounds (\ref{eq:gFlowBounds})
ensure that, if any of the commodity passes along an arc, then that arc appears in
the tour.

This formulation contains ${\cal O}(|E|)$ variables and ${\cal O}(|E|)$ constraints.

Using a technique due to Gouveia \cite{G95}, we can project this formulation into
the space of the $x$ variables:
\begin{theorem} \label{th:projection}
Let $({\tilde x}^*,g^*) \in [0,1]^{|A|} \times \RR_+^{|A|}$ be a feasible solution to the LP
relaxation of the formulation (\ref{eq:sparseSCFobj})--(\ref{eq:sparseSCFbin}). Let $x^*$ be
the corresponding point in $[0,2]^{|E|}$ defined by setting
$x^*_{ij} = {\tilde x}^*_{ij} + {\tilde x}^*_{ji}$ for all $\{i,j\} \in E$. Then $x^*$
satisfies all of the following linear inequalities:
\begin{equation} \label{eq:weak-con}
\sum_{e \in \delta(S)} x_e \ge 2 \frac{|S \cap V_R|}{n_R-1}
\qquad (\forall S \subset V \setminus \{1\}: S \cap V_R \ne \emptyset).
\end{equation}
\end{theorem}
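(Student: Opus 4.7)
The plan is to use a standard flow-based aggregation argument, following Gouveia's projection technique. Fix $S \subset V \setminus \{1\}$ with $S \cap V_R \neq \emptyset$; since $1 \notin S$, every required node in $S$ satisfies the ``demand 1'' equation (\ref{eq:gFlowReq}), while every non-required node in $S$ satisfies the ``conservation'' equation (\ref{eq:gFlowNonReq}).

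The first step is to sum (\ref{eq:gFlowReq}) and (\ref{eq:gFlowNonReq}) over all $i \in S$. The internal arcs (those with both endpoints in $S$) cancel, so one is left with $\sum_{a \in \delta^-(S)} g^*_a - \sum_{a \in \delta^+(S)} g^*_a = |S \cap V_R|$, which already gives $\sum_{a \in \delta^-(S)} g^*_a \geq |S \cap V_R|$ because the flows are non-negative.

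The second step is to invoke the capacity bounds (\ref{eq:gFlowBounds}), namely $g^*_a \leq (n_R-1)\tilde{x}^*_a$. This immediately yields $\sum_{a \in \delta^-(S)} \tilde{x}^*_a \geq |S \cap V_R|/(n_R-1)$. Then the third step is to aggregate the $\tilde{x}$-conservation equations (\ref{eq:xFlow}) over $i \in S$, which similarly collapses to $\sum_{a \in \delta^-(S)} \tilde{x}^*_a = \sum_{a \in \delta^+(S)} \tilde{x}^*_a$. Adding the two symmetric lower bounds and observing that $\sum_{e \in \delta(S)} x^*_e = \sum_{a \in \delta^-(S)} \tilde{x}^*_a + \sum_{a \in \delta^+(S)} \tilde{x}^*_a$ by construction of $x^*$ finishes the argument.

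There is no real obstacle here: everything is a routine book-keeping exercise. The only points that need a moment of care are (i) checking that the hypothesis $1 \notin S$ is what guarantees the right-hand side of the aggregated commodity equation is exactly $|S \cap V_R|$ rather than $|S \cap V_R| - 1$, and (ii) remembering to divide by $n_R - 1$ correctly when passing from $g^*$ to $\tilde{x}^*$ via the capacity bound. The factor of $2$ in (\ref{eq:weak-con}) then appears naturally because both $\delta^-(S)$ and $\delta^+(S)$ contribute to the undirected cut $\delta(S)$.
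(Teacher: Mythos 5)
Your proof is correct and follows essentially the same route as the paper's own argument: aggregate the commodity-flow equations (\ref{eq:gFlowReq})--(\ref{eq:gFlowNonReq}) over $S$, apply non-negativity and the capacity bounds (\ref{eq:gFlowBounds}) to get a lower bound on $\sum_{a \in \delta^-(S)} \tilde{x}^*_a$, then use the aggregated $\tilde{x}$-conservation equations (\ref{eq:xFlow}) to double it across the undirected cut. Your two cautionary remarks (the role of $1 \notin S$ and the division by $n_R - 1$) are sound and merely make explicit what the paper leaves implicit.
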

\begin{proof}
If we sum the constraints (\ref{eq:gFlowReq}) over all $i \in S \cap V_R$, together with the
constraints (\ref{eq:gFlowNonReq}) over all $i \in S \setminus V_R$, we obtain:
\[
\sum_{a \in \delta^-(S)} g_a = \sum_{a \in \delta^+(S)} g_a + |S \cap V_R|.
\]
Together with the bounds (\ref{eq:gFlowBounds}), this implies:
\begin{equation} \label{eq:step1}
(n_R - 1) \sum_{a \in \delta^-(S)} {\tilde x}_a \ge |S \cap V_R|,
\end{equation}
Now, the equations (\ref{eq:xFlow}) imply:
\begin{equation} \label{eq:step2}
\sum_{a \in \delta^-(S)} {\tilde x}_a = \sum_{a \in \delta^+(S)} {\tilde x}_a.
\end{equation}
>From (\ref{eq:step1}) and (\ref{eq:step2}) we obtain:
\[
\sum_{a \in \delta^-(S) \cup \delta^+(S)} {\tilde x}_a
\ge 2 \frac{|S \cap V_R|}{n_R-1}.
\]
The result then follows from the construction of $x^*$.
\end{proof}

Note that the inequalities (\ref{eq:weak-con}) are weaker than the connectivity
inequalities (\ref{eq:con}). As a result, the lower bound associated with the
SCF formulation (\ref{eq:sparseSCFobj})--(\ref{eq:sparseSCFbin}) cannot be
better than the one associated with Fleischmann's formulation
(\ref{eq:stsp-obj})--(\ref{eq:genint}).

\subsection{Strengthened single-commodity flow formulation} \label{sub:newSCF2}

It is possible to strengthen the SCF formulation given in the previous subsection.
Note that one can assume that, if any required node is visited more than once
by the salesman, then the commodity is delivered on the first visit. Accordingly,
for each node $i \in V \setminus \{1\}$, let $r_i$ be the minimum number of
required nodes (not including the depot) that the salesman must have visited
when he leaves $i$ for the first time. Also, by convention, let $r_1 = 0$. (Note
that one can compute $r_i$ for all $i \in V \setminus \{1\}$ efficiently, using
Dijkstra's single-source shortest-path algorithm \cite{D59}). Now, the constraints
(\ref{eq:gFlowBounds}) can be replaced with the following stronger constraints:
\begin{equation} \label{eq:gFlowBounds2}
0 \le g_{ij} \le (n_R - r_i - 1) {\tilde x}_{ij} \qquad (\forall (i,j) \in A).
\end{equation}
This makes the projection into $x$-space stronger, as expressed in the
following theorem:
\begin{theorem} \label{th:projection2}
Let $({\tilde x}^*,g^*)$ be a feasible solution to the LP relaxation of the
formulation (\ref{eq:sparseSCFobj})--(\ref{eq:gFlowNonReq}),
(\ref{eq:sparseSCFbin}), (\ref{eq:gFlowBounds2}). Also, for any set
$S \subseteq V \setminus \{1\}$ such that $S \cap V_R \ne \emptyset$, let
$T(S)$ be the set of all nodes that are not in $S$ but are adjacent to at
least one node in $S$. Finally, define $L(S) = \min_{i \in T} r_i$ and
$U(S) = \max_{i \in T} r_i$. Then, the point $x^*$ corresponding to
$({\tilde x}^*,g^*)$ satisfies the following inequality for all such sets
$S$ and for $k = L(S), \ldots, U(S)$:
\begin{equation} \label{eq:weak-con2}
(n_R - k - 1) \sum_{e \in \delta(S)} x_e 
+ 2 \sum_{\{i,j\} \in \delta(S): j \in S}
\max \left\{ 0, k - r_i \right\} x_{ij}
\ge 2 |S \cap V_R|.
\end{equation}
\end{theorem}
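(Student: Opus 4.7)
The plan is to mirror the proof of Theorem~\ref{th:projection} as closely as possible, but to exploit the sharper bound (\ref{eq:gFlowBounds2}) in place of (\ref{eq:gFlowBounds}). Summing (\ref{eq:gFlowReq}) over $i\in S\cap V_R$ and (\ref{eq:gFlowNonReq}) over $i\in S\setminus V_R$ still yields
\[
\sum_{a\in\delta^-(S)} g_a \;-\; \sum_{a\in\delta^+(S)} g_a \;=\; |S\cap V_R|,
\]
and dropping the nonnegative second sum gives $|S\cap V_R|\le\sum_{a\in\delta^-(S)} g_a$. Every arc $(i,j)\in\delta^-(S)$ has its tail $i$ in $T(S)$, so the strengthened bound (\ref{eq:gFlowBounds2}) upgrades this to $|S\cap V_R|\le\sum_{(i,j)\in\delta^-(S)}(n_R-r_i-1)\,\tilde{x}_{ij}$.

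The heart of the argument is then the elementary coefficient inequality
\[
n_R-r_i-1 \;\le\; (n_R-k-1) \;+\; \max\{0,\,k-r_i\},
\]
valid for every integer $k$: it is an equality when $k\ge r_i$ (via $n_R-r_i-1=(n_R-k-1)+(k-r_i)$), and when $k<r_i$ the max vanishes while $n_R-r_i-1<n_R-k-1$. Substituting into the previous display, and using $\tilde{x}_{ij}\ge 0$, splits the right-hand side into a uniform part $(n_R-k-1)\sum_{\delta^-(S)}\tilde{x}_{ij}$ plus a correction $\sum_{\delta^-(S)}\max\{0,k-r_i\}\,\tilde{x}_{ij}$ weighted by the defect of $r_i$ below $k$. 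The range $k\in\{L(S),\dots,U(S)\}$ is simply the interval on which this correction is nontrivial.

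It remains to pass from the directed to the undirected variables, again as in Theorem~\ref{th:projection}. The $\tilde{x}$-flow balance (\ref{eq:xFlow}) converts $\sum_{\delta^-(S)}\tilde{x}_a$ into $\tfrac12\sum_{e\in\delta(S)} x_e$. For the correction, I identify each arc $(i,j)\in\delta^-(S)$ with the unique edge $\{i,j\}\in\delta(S)$ whose endpoint $j$ lies in $S$; since the weights $\max\{0,k-r_i\}$ are nonnegative and $\tilde{x}_{ij}\le\tilde{x}_{ij}+\tilde{x}_{ji}=x_{ij}$, the replacement $\tilde{x}_{ij}\to x_{ij}$ is a valid relaxation. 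Multiplying through by $2$ then produces (\ref{eq:weak-con2}).

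I expect the only non-mechanical step to be spotting the coefficient decomposition above; after that the remainder is bookkeeping modelled on Theorem~\ref{th:projection}. The main opportunity for error is sign-tracking, since several one-sided relaxations are composed in sequence and each must weaken the upper bound on $|S\cap V_R|$ in a direction compatible with the final inequality.
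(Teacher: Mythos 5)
Your proof is correct and follows essentially the same route as the paper's: sum the flow-balance constraints over $S$, apply the strengthened bounds (\ref{eq:gFlowBounds2}), split the coefficient $n_R-r_i-1$ into $(n_R-k-1)+\max\{0,k-r_i\}$ (the paper writes this as an exact identity with $(k-r_i)$ and then relaxes via nonnegativity, which is the same content as your coefficient inequality), relax $\tilde{x}_{ij}$ to $x_{ij}$ in the correction term, and finish with the identity (\ref{eq:step2}) to convert the cut term to $\tfrac12\sum_{e\in\delta(S)}x_e$. The only cosmetic difference is that you package the relaxation as a pointwise coefficient bound rather than an identity-plus-weakening, so nothing further is needed.
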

\begin{proof}
As in the proof of Theorem \ref{th:projection}, the constraints
(\ref{eq:gFlowReq}) and (\ref{eq:gFlowNonReq}) imply:
\[
\sum_{a \in \delta^-(S)} g_a = \sum_{a \in \delta^+(S)} g_a + |S \cap V_R|.
\]
Using the strengthened bounds (\ref{eq:gFlowBounds2}), this implies:
\[
\sum_{(i,j) \in \delta^-(S)} (n_R - r_i - 1) {\tilde x}_{ij} \ge |S \cap V_R|.
\]
We can re-write this as:
\[
(n_R - k - 1) \sum_{(i,j) \in \delta^-(S)} {\tilde x}_{ij}
+ \sum_{(i,j) \in \delta^-(S)} (k - r_i) {\tilde x}_{ij}
\ge |S \cap V_R|.
\]
Together with non-negativity on $\tilde x$ this implies:
\[
(n_R - k - 1) \sum_{(i,j) \in \delta^-(S)} {\tilde x}_{ij}
+ \sum_{(i,j) \in \delta^-(S)} \max \{ 0, k - r_i \}
({\tilde x}_{ij} + {\tilde x}_{ji}) \ge |S \cap V_R|.
\]
The result then follows from the identity (\ref{eq:step2}) and the construction
of $x^*$.
\end{proof}

Our experiments on small instances lead us to conjecture that the inequalities
(\ref{eq:weak-con2}), together with the bounds $x \in [0,2]^{|E|}$, give a
complete description of the projection into $x$-space.

Note that, if one sets $k = L(S)$ in Theorem \ref{th:projection2}, one
obtains the following family of inequalities:
\[
\sum_{e \in \delta(S)} x_e \ge 2 \frac{|S \cap V_R|}{(n_R - L(S) - 1)}
\qquad (\forall S \subset V \setminus \{1\}: S \cap V_R \ne \emptyset).
\]
Since $L(S)$ cannot exceed $n_R - 1 - |S \cap V_R|$, these inequalities are
intermediate in strength between the inequalities (\ref{eq:weak-con}) and
the connectivity inequalities (\ref{eq:con}). Accordingly, we conjecture that
the lower bound from the strengthened SCF formulation always lies between the
one from the original SCF formulation and the one from Fleischmann's
formulation.

We also remark that one could tighten the constraints (\ref{eq:gFlowBounds2})
further for the arcs that are incident on the depot. Indeed, in an optimal
solution, the salesman would never depart from the depot without at least one
unit of the commodity, and would never arrive at the depot with more than
$n_R-2$ units of the commodity. One can check, however, that this further
tightening in the $({\tilde x},g)$-space does not lead to any improvement in
the resulting valid inequalities in the $x$-space.

\subsection{Multi-commodity flow formulation} \label{sub:newMCF}

Similar to the MCF formulation for the standard TSP, we assume that the salesman
leaves the depot (node $1$) with one unit of commodity for each required node.
Accordingly, let the binary variable $f_a^k$ be $1$ if and only if commodity $k$
passes through arc $a$, for every $k \in V_R \setminus \{1\}$ and $a \in A$. The
resulting formulation then consists of minimising (\ref{eq:sparseSCFobj}) subject
to the following constraints:
\begin{eqnarray}
\label{eq:MCFreq}
  & \hspace{-3.7cm} \sum_{a \in \delta^+(i)}{\tilde x}_a \ge 1 & (\forall i \in V_R)\\
\label{eq:MCFin.ou}
  & \hspace{-2cm} \sum_{a \in \delta^+(i)}{\tilde x}_a
  = \sum_{a \in \delta^-(i)}{\tilde x}_a & (\forall i\in V)\\
\label{eq:fFlowNonReq}
  & \hspace{-1.2cm} \sum_{a \in \delta^-(i)} f_a^k - \sum_{a \in \delta^+(i)} f_a^k
  = 0 & (\forall i \in V \setminus \{1\}; k \in V_R \setminus \{1,i\})\\
\label{eq:fFlowIn}
  & \hspace{-1.2cm} \sum_{a \in \delta^-(k)} f_a^k - \sum_{a \in \delta^+(k)} f_a^k
  = 1 & (\forall k \in V_R \setminus \{1\})\\
\label{eq:fFlowOut}
& \hspace{-.8cm} \sum_{a \in \delta^-(1)} f_a^k - \sum_{a \in \delta^+(1)} f_a^k
  = -1 & (\forall k \in V_R \setminus \{1\})\\
\label{eq:MCFbound}
  & \hspace{-5cm} {\tilde x}_a \ge f_a^k & (\forall a \in A ; k \in V_R \setminus \{1\})\\
& \hspace{-4.5cm} {\tilde x}_a \in \{0,1\} & (\forall a \in A)\\
& \hspace{-4.5cm} f_a^k \in \{0,1\} & (\forall a \in A \& k \in V_R \setminus \{1\}).
\end{eqnarray}
The constraints are interpreted along similar lines to those of the formulations
already seen.

This MCF formulation has ${\cal O}(n_R|E|)$ variables and ${\cal O}(n_R|E|)$ constraints.
As for the projection into the space of $x$ variables, we have the following result:
\begin{proposition}
Let $({\tilde x}^*,f^*)$ be a feasible solution to the LP relaxation of the MCF
formulation. Let $x^*$ be the corresponding point in $[0,2]^{|E|}$ defined by setting
$x^*_{ij} = {\tilde x}^*_{ij} + {\tilde x}^*_{ji}$ for all $\{i,j\} \in E$. Then $x^*$
satisfies all of the the connectivity inequalities (\ref{eq:con}). 
\end{proposition}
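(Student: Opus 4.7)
The plan is to mimic the classical proof that the MCF formulation for the standard TSP implies connectivity: for any cut $S$ separating required nodes, there is a commodity whose source lies on one side and whose sink lies on the other, and the bound $\tilde x^*_a \ge f^{k*}_a$ will force at least one unit of $\tilde x^*$ to cross.

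First, I would observe that the inequality $\sum_{e\in\delta(S)} x_e \ge 2$ depends only on the unordered partition $\{S, V\setminus S\}$, and that the hypothesis $S \cap V_R \ne \emptyset$ and $V_R \setminus S \ne \emptyset$ is symmetric in $S$ and $V\setminus S$. Hence I may assume without loss of generality that $1 \in S$. The assumption $V_R\setminus S \ne \emptyset$ then guarantees the existence of some $k \in V_R \setminus \{1\}$ with $k \notin S$.

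Next, I would sum the flow-balance constraints (\ref{eq:fFlowNonReq}) and (\ref{eq:fFlowOut}) for commodity $k$ over all $i \in S$. Arcs with both endpoints inside $S$ cancel, and the only nonzero right-hand-side contribution comes from node $1$, yielding
\[
\sum_{a \in \delta^+(S)} f^{k*}_a - \sum_{a \in \delta^-(S)} f^{k*}_a = 1.
\]
Combined with non-negativity of $f^{k*}$, this gives $\sum_{a \in \delta^+(S)} f^{k*}_a \ge 1$, and the coupling bounds (\ref{eq:MCFbound}) then yield $\sum_{a \in \delta^+(S)} \tilde x^*_a \ge 1$. A parallel summation of the balance equations (\ref{eq:MCFin.ou}) over $i \in S$ shows that $\sum_{a \in \delta^+(S)} \tilde x^*_a = \sum_{a \in \delta^-(S)} \tilde x^*_a$, so both sums are at least $1$.

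Adding the two bounds and recalling that $x^*_{ij} = \tilde x^*_{ij} + \tilde x^*_{ji}$, which converts the sum over $\delta^+(S) \cup \delta^-(S)$ into a sum over $\delta(S)$, delivers $\sum_{e \in \delta(S)} x^*_e \ge 2$, as required. I do not foresee any real obstacle here; the argument is entirely a routine cut summation, and the only point requiring mild care is handling the side of the depot via the symmetry-based WLOG reduction at the very beginning.
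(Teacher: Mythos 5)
Your proof is correct, and its skeleton matches the paper's: fix a commodity $k$ whose sink is separated from the depot by the cut, use the coupling bounds (\ref{eq:MCFbound}) to force $\sum_{a\in\delta^+(S)}\tilde x^*_a\ge 1$, then use the $\tilde x$-balance equations (\ref{eq:MCFin.ou}) to double this across both orientations of the cut. The difference is in how the key step is justified: the paper obtains the cut inequality for the commodity flow by invoking the max-flow min-cut theorem of Ford and Fulkerson, treating the constraints (\ref{eq:fFlowNonReq})--(\ref{eq:MCFbound}) as defining a feasible unit flow with capacities $\tilde x^*_a$, whereas you derive it by directly summing the conservation constraints (\ref{eq:fFlowNonReq}) and (\ref{eq:fFlowOut}) over the depot side of the cut and using nonnegativity of $f^{k*}$. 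Your version is more self-contained and, strictly speaking, more economical: the full strength of max-flow min-cut (strong duality) is never needed, only the elementary ``flow value across any cut equals net flow'' identity, which is exactly what your summation produces. A second, minor difference is bookkeeping: the paper fixes $k$ and ranges over all sets $S\subset V\setminus\{1\}$ containing $k$, leaving the case $1\in S$ implicit (via $\delta(S)=\delta(V\setminus S)$), while your opening WLOG reduction makes that complementation explicit, which is a small but genuine gain in completeness, since the connectivity inequalities (\ref{eq:con}) are stated for sets that may contain the depot.
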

\begin{proof}
For a fixed node $k \in V_R \setminus \{1\}$, the constraints
(\ref{eq:fFlowNonReq})--(\ref{eq:MCFbound}), together with the well-known
{\em max-flow min-cut}\/ theorem \cite{FF56} imply the following exponentially-large
family of inequalities:
\[
\sum_{a \in \delta^+(S)} {\tilde x}^*_a \ge 1 \qquad
(\forall S \subset V_R \setminus \{1\}: k \in S).
\]
The equations (\ref{eq:MCFin.ou}) then imply:
\[
\sum_{a \in \delta^+(S) \cup \delta^-(S)} {\tilde x}^*_a \ge 2 \qquad
(\forall S \subset V_R \setminus \{1\}: k \in S).
\]
Next, the relationship between ${\tilde x}^*$ and $x^*$ gives 
\[
\sum_{e \in \delta(S)} x^*_e \ge 2 \qquad
(\forall S \subset V_R \setminus \{1\}: k \in S).
\]
Applying this for all $k \in V_R \setminus \{1\}$ yields the result.
\end{proof}

This result implies that the lower bound from this MCF formulation is no
worse than the one from Fleischmann's formulation. We conjecture that
the two bounds are equal.

\section{Time-Staged Formulations of the STSP} \label{se:time}

In this section, we adapt the TS formulation for the standard TSP, mentioned in
Subsection \ref{sub:lit2}, to the Steiner case. A simple formulation is
presented in the following subsection. A method to reduce the number of
variables is presented in Subsection \ref{sub:newTS2}. Then, in Subsection
\ref{sub:summary}, we evaluate the total number of variables and constraints
in each of the formulations that we have considered.

\subsection{An initial time-staged formulation} \label{sub:newTS1}

In this context, it is natural to have one time stage for each time that an
edge of $G$ is traversed (in either direction).  In terms of the classical
STSP formulation given in Subsection \ref{sub:lit3}, the total number of time
stages will then be equal to $\sum_{e \in E} x_e$.  The problem here is that
we do not know this value in advance. Observe, however, that
Lemma~\ref{le:once} implies that it cannot exceed $2|E|$.

Now, let $A$ be defined as in Subsection \ref{sub:notation}, and recall that
$|A| = 2|E|$. For all $a \in A$ and all $1 \le k \le |A|$, let the binary
variable $r_a^k$ take the value $1$ if and only if arc $a$ is the $k$th arc to
be traversed in the tour. Our TS formulation for the STSP is as follows:
\begin{eqnarray}
\min              & \hspace{-6mm} \sum_{k = 1}^{|A|} \sum_{a \in A} c_a r_a^k & \\
\label{eq:newTS1}
\mbox{s.t.}       & \hspace{-6mm} \sum_{a \in \delta^+(1)} r_a^1 = 1 & \\ 
\label{eq:newTS2} & \hspace{-6mm} r_a^1 = 0 & \hspace{-6mm} (a \in A \setminus \delta^+(1)) \\
\label{eq:newTS3} & \hspace{-6mm} \sum_{k = 1}^{|A|} \sum_{a \in \delta^+(1)} r_a^k
                    = \sum_{k = 1}^{|A|} \sum_{a \in \delta^-(1)} r_a^k & \\
\label{eq:newTS4} & \hspace{-6mm} \sum_{k = 1}^{|A|} \sum_{a \in \delta^+(i)} r_a^k \ge 1
                  & \hspace{-6mm} (\forall i \in V_R)\\
\label{eq:newTS5} & \hspace{-6mm} \sum_{a \in \delta^-(i)} r_a^k = \sum_{a \in \delta^+(i)} r_a^{k+1} 
                  & \hspace{-6mm} (\forall i\in V ; k = 1, \ldots, |A|-1)\\
\label{eq:newTS6} & \hspace{-6mm} r_a^k \in \{0,1\}
                  & \hspace{-6mm} (\forall a \in A, k = 1, \ldots, |A|).
\end{eqnarray}
Constraints (\ref{eq:newTS1}) and (\ref{eq:newTS2}) ensure that the salesman departs
from the depot in the first time stage, and constraint (\ref{eq:newTS3}) ensures that
he arrives at the depot as many times as he leaves it. Constraints (\ref{eq:newTS4})
ensure that each required node is visited at least once. Constraints (\ref{eq:newTS5})
ensure that, if the salesman arrives at a non-depot node in any given time stage,
then he must depart from it in the subsequent time stage. Finally, constraints
(\ref{eq:newTS6}) are the usual binary conditions.

This TS formulation has ${\cal O}(|E|^2)$ variables and ${\cal O}(n|E|)$ constraints.
We conjecture that the lower bound from this TS formulation always lies between the
one from our strengthened SCF formulation and the one from Fleischmann's formulation.

\subsection{Bounding the number of edge traversals} \label{sub:newTS2}

Clearly, one could reduce the number of variables and constraints in the above TS
formulation if one had a better upper bound on the total number of times that the
salesman traverses an edge of $G$. The following theorem provides such a bound:

\begin{theorem}\label{thm:STSP-tstages-bd}
  For every instance of the STSP which has a solution, there
  exists an optimal solution in which the total number of edge
  traversals (in either direction) does not exceed $2(|V|-1)$.
\end{theorem}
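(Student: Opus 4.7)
The plan is to take an optimal solution $x^*$ with $\sum_e x^*_e$ as small as possible among all optima, and to show directly that this sum is at most $2(|V|-1)$. By Lemma~\ref{le:once} each $x^*_e \in \{0,1,2\}$, so $x^*$ corresponds to a connected Eulerian multigraph $H$ covering $V_R$; write $G_H=(V_H,E_H)$ for its simple support graph. Two structural observations follow immediately from optimality: (i) every doubled edge $e$ (with $x^*_e=2$) must be a bridge of $G_H$---otherwise, setting $x^*_e=0$ preserves Eulerian parity (both endpoint degrees drop by $2$), connectivity (since $G_H-e$ is still connected), and $V_R$-coverage, while dropping the cost by $2c_e>0$; and (ii) for any simple cycle $C$ of $G_H$ whose edges all carry $x^*_e=1$, removing $E(C)$ from $H$ must disconnect the multigraph or orphan a required vertex, as otherwise the same deletion would yield a cheaper feasible solution.

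The heart of the argument is to use (ii) to show that each non-trivial $2$-edge-connected block $B$ of $G_H$ is a simple cycle, so that $|E(B)|=|V(B)|$. The idea is that if $|E(B)|\ge|V(B)|+1$, then via an ear decomposition of $B$ one can exhibit a cycle $C$ made up entirely of single-multiplicity edges whose deletion from $H$ leaves the multigraph connected and still covering $V_R$, contradicting (ii). I expect this to be the main obstacle: one has to choose $C$ so that any degree-$2$ vertex of $B$ lying on $C$ is not in $V_R$, and to argue that minimality of $\sum_e x^*_e$---beyond plain cost-optimality---already excludes the awkward configurations (such as theta-like blocks with every internal vertex required) in which no suitable cycle is available.

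Once the block structure is nailed down, the counting is short. Let $\beta$ denote the number of bridges of $G_H$ and $p$ the number of cycle blocks; observation (i) gives $|\{e:x^*_e=2\}|\le\beta$. The block-cut identity $|V_H|-1=\sum_B(|V(B)|-1)$, combined with $|V(B)|-1\ge 2$ for each cycle block, yields $|V_H|-1\ge\beta+2p$, while $|E(G_H)|=\beta+\sum_{\text{cycle }B}|V(B)|=|V_H|-1+p$. Therefore $\sum_e x^*_e = |E(G_H)|+|\{e:x^*_e=2\}|\le(|V_H|-1+p)+\beta\le 2(|V_H|-1)-p\le 2(|V|-1)$, as desired.
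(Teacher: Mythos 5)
The obstacle you flagged in your second paragraph is not a removable technicality: the structural claim it is meant to support---that every non-bridge block of a cost-optimal, traversal-minimal solution is a simple cycle---is simply false. Take $V=\{u,w,a_1,b_1,a_2,b_2,a_3,b_3\}$, let $E$ consist of the edge $\{u,w\}$ together with the three paths $u\!-\!a_m\!-\!b_m\!-\!w$ for $m=1,2,3$, put $V_R=V$, and set $c_{uw}=1$ and $c_e=2$ on the nine path edges. Parity at $a_m,b_m$ forces each path to be used either with all three edges once (cost $6$) or with doubled edges (cost at least $8$, e.g.\ multiplicities $(2,2,0)$, $(0,2,2)$ or $(2,0,2)$), and parity at the hubs forces the multiplicity of $\{u,w\}$ to have the same parity as the number of all-once paths; enumerating the cases shows the unique optimum is $x_e=1$ on all ten edges (cost $19$, against $20$ for the best alternative). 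The support of this unique optimal solution is $2$-connected, hence a single block, with $10$ edges on $8$ vertices---not a cycle. Your observation (ii) holds vacuously here and so yields no contradiction: every simple cycle of this graph is the union of two of the four $u$--$w$ paths (counting $\{u,w\}$ as a path of length one), and deleting it isolates the required internal vertices of each length-$3$ path it uses. Since the optimum is unique, no appeal to traversal-minimality ``beyond plain cost-optimality'' can exclude this configuration, so both your ear-decomposition step and the final count (which needs every non-bridge block $B$ to contribute exactly $|V(B)|$ edges) collapse. The theorem itself is of course consistent with the example, since $10\le 2(8-1)$.

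The paper's proof sidesteps the orphaning problem entirely, and this is precisely what your plan is missing. It needs a deletable cycle only in the dense regime: among optimal solutions take one minimizing $\ell=\sum_e x_e$, form the multigraph $H$ on its $k\le|V|$ non-isolated nodes, and suppose $\ell>2(|V|-1)\ge 2(k-1)$. Fix a spanning tree $T$ of $H$; then $H-E(T)$ has more than $k-1$ edges, so it is not a forest and contains a cycle $C$ that is edge-disjoint from $T$ (this is Lemma~\ref{lem:nondisconnectingcycle}). Deleting $E(C)$ cannot disconnect anything or orphan any node, because $T$ survives intact; degrees drop by even amounts, so feasibility is preserved, the cost does not increase, and the number of traversals strictly decreases---contradicting the choice of the solution. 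In short, ``good'' cycles are guaranteed to exist only when $\ell>2(k-1)$, and in that regime they can be chosen to avoid a spanning tree, which makes the required-vertex issue you struggled with disappear; your approach tries to find such cycles inside sparse blocks as well, where, as the example shows, they need not exist.
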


For the proof of this theorem, we will use the following lemma.

\begin{lemma}\label{lem:nondisconnectingcycle}
  If~$H$ is a connected graph on~$k$ nodes which has more than
  $2(k-1)$ edges, then there exists a cycle~$C$ in~$H$ such that the
  graph arising when the edges of~$C$ are deleted from~$H$ is still
  connected.
\end{lemma}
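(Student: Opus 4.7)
The plan is to exhibit a cycle~$C$ whose edges all lie outside some spanning tree of~$H$. Once this is done, deleting $E(C)$ from~$H$ leaves every tree edge intact, so the resulting graph still contains a spanning tree and is therefore connected.

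Concretely, I would first fix any spanning tree~$T$ of~$H$; such a tree exists because $H$ is connected, and it has exactly $k-1$ edges. Setting $F := E(H) \setminus E(T)$, the hypothesis $|E(H)| > 2(k-1)$ gives
\[
|F| \;=\; |E(H)| - (k-1) \;>\; 2(k-1)-(k-1) \;=\; k-1,
\]
so $|F| \ge k$.

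Next, I would consider the spanning subgraph $(V(H), F)$: it has $k$ vertices and at least $k$ edges, so it cannot be a forest, since a forest on $k$ vertices has at most $k-1$ edges (and the presence of parallel edges or loops in~$H$, permitted by the STSP application, can only make cycles appear sooner). Hence $(V(H), F)$ contains a cycle~$C$, every edge of which is a non-tree edge of~$T$.

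Finally, since $E(C) \cap E(T) = \emptyset$, the subgraph $H - E(C)$ still contains the spanning tree~$T$ and is therefore connected, as required. The argument is essentially elementary; the only point worth flagging is that the strict inequality $|F| \ge k$ rests directly on the strict assumption $|E(H)| > 2(k-1)$ rather than $\ge 2(k-1)$, which is precisely the extra edge needed to force a cycle among the non-tree edges.
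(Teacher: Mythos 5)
Your proof is correct and follows essentially the same route as the paper's: fix a spanning tree~$T$, observe that the non-tree edges number more than $k-1$ and hence contain a cycle, and note that deleting that cycle's edges leaves $T$ (and thus connectivity) intact. The only cosmetic difference is that the paper exhibits the cycle explicitly via a spanning forest of the non-tree edges plus one extra edge, whereas you invoke the standard fact that a graph on $k$ vertices with at least $k$ edges cannot be a forest.
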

\begin{proof}
  Let~$T$ be a spanning tree in~$H$, and let $H'$ be the graph
  resulting if the edges of~$T$ are deleted from~$H$.  For the
  number~$\ell'$ of edges of~$H'$ we have $\ell' = \ell - (k-1)$,
  which, by the hypothesis in the lemma, is greater than $k -1$.
  Clearly, the number of nodes of~$H'$ is equal to~$k$.

  Now, let~$T'$ be a spanning forest in~$H'$.  Note that~$H'$ may fail
  to be connected.  Firstly, if one of the connected components
  of~$H'$ contains an edge~$e$ other than those in~$T'$, then let $C$
  be the cycle defined by taking~$e$ and the path in~$T'$ connecting
  the end-nodes of~$e$.  Clearly, deleting the edges of~$C$ from
  $H$ leaves a connected graph because connectivity is assured by the
  tree~$T$.

  But, secondly, it is impossible that all connected components
  of~$H'$ contain no other edges except those in~$T'$: In that case,
  $H'$ would be a forest, and hence have at most $k-1$ edges.  But
  the number of edges of~$H'$ is greater than $k-1$, a contradiction.
\end{proof}

We can now complete the proof of the theorem.

\begin{proofof}{Proof of Theorem~\ref{thm:STSP-tstages-bd}}
  Let~$x$ be an optimal solution to the STSP, which has, among
  all optimal solutions, the smallest number of edge traversals.

  Construct a graph~$H$ by starting with the node set~$V$, and
  precisely~$x_e$ copies of the edge~$e$, for all $e\in E$.  Then
  delete every isolated node from~$H$.  The number of nodes~$k$
  of~$H$ is at most~$|V|$, and the number of edges is $\ell :=
  \sum_{e\in E} x_e$.

  For the sake of contradiction, we assume that $\ell > 2(|V|-1)$.  If
  that is the case, then Lemma~\ref{lem:nondisconnectingcycle}, is
  applicable.  Let~$C$ be a cycle with the property given in the
  lemma, and let~$F$ be its edge set.  For every $e\in E$, denote
  by~$y_e$ the number of times the edge~$e$ occurs in~$C$.  The fact
  that after deleting the edges of~$C$ from~$H$, a connected graph
  remains, implies that $x - y$ is a solution to the STSP, whose total
  cost is at most that of~$x$.  Thus, $x-y$ is an optimal solution in
  which the total number of edge traversals is smaller than in~$x$,
  contradicting the choice of~$x$.

  Thus, we conclude that $\sum_e x_e = \ell \le 2(|V|-1)$.
\end{proofof}

An immediate consequence of this theorem is that one does not need to define
the variables $r_a^k$ in the TS formulation when $k > 2(|V|-1)$. The
constraints in which $k > 2(|V|-1)$ can be dropped as well. As a result, the
number of variables and constraints in the TS formulation can be reduced to
${\cal O}(n|E|)$ and ${\cal O}(n^2)$, respectively. We conjecture that this
reduction in size has no effect on the associated lower bound.

\subsection{Summary} \label{sub:summary}

Table \ref{tab:size} displays, for each of the STSP formulations that we have
considered, bounds on the total number of variables and constraints. Here,
`classical' refers to the formulation of Fleischmann \cite{F85} mentioned in
Subsection \ref{sub:lit3}, `SCF' refers to either of the single-commodity flow
formulations given in Subsections \ref{sub:newSCF1} and \ref{sub:newSCF2},
`MCF' refers to the multi-commodity flow formulation given in Subsection
\ref{sub:newMCF}, `TS1' refers to the time-staged formulation given in
Subsection \ref{sub:newTS1}, and `TS2' refers to the reduced time-staged
formulation given in Subsection \ref{sub:newTS2}.

\begin{table}
\centering  
\begin{tabular}{cccccc}  
\hline
Formulation  & Classical          & SCF           & MCF                & TS1               & TS2 \\
\hline
Variables    & $|E|$              & ${\cal O}(|E|)$ & ${\cal O}(n_R|E|)$ & ${\cal O}(|E|^2)$ & ${\cal O}(n|E|)$ \\
Constraints & ${\cal O}(2^{n_R})$ & ${\cal O}(|E|)$ & ${\cal O}(n_R|E|)$ & ${\cal O}(n|E|)$  & ${\cal O}(n^2)$ \\
\hline
\end{tabular}
\caption{Alternative STSP formulations and their size}
\label{tab:size}
\end{table}

Observe that, in the case of real road networks, the graph $G$ is typically very
sparse, and we have $|E| = {\cal O}(|V|)$. Then, any of the new formulations could
potentially be used in practice. We would recommend using MCF or TS2 for small or
medium-sized instances, due to the relative tightness of the bound, and SCF for
large instances, due to the extremely small number of variables and constraints.

Observe that we have not adapted the MTZ formulation to the Steiner case. This is
because the MTZ formulation is based on the idea of determining the order in which
the nodes are visited. Since nodes can be visited multiple times in the Steiner case,
a unique order cannot be determined. As a result, it does not appear possible to
adapt the MTZ formulation. This is not a problem, though, given the extreme weakness
of the MTZ formulation mentioned in Subsection \ref{sub:lit2}.

\section{Some Related Problems} \label{se:other}

Many variants and extensions of the TSP have appeared in the literature, such as
the {\em Orienteering Problem}\/ (e.g., \cite{FDG05,FST02,GLV87}), the
{\em Prize-Collecting}\/ TSP (e.g., \cite{B89,B02,FDG05}), the {\em Capacitated
Profitable Tour Problem}\/ (e.g., \cite{FDG05,JPSP12}), the {\em Generalized}\/
TSP (e.g., \cite{FST02,SKGS69}), the TSP with {\em Time Windows}\/ (e.g.,
\cite{AFG01,DDGS95}) and the {\em Sequential Ordering Problem} \cite{E88}.
For each of these problems, it is easy to define a `Steiner' version. It suffices
to define the problem on a general graph $G=(V,E)$, designate node $1$ as the
`depot', define a set $V_R \subset V \setminus \{1\}$ of `customer' nodes, permit
edges to traversed more than once if desired, and permit nodes to be visited more
than once if desired.

In this section, we explore possible ways to formulate these other problems of
`Steiner' type. For the sake of brevity, however, we restrict attention to three
specific problems, which we call the {\em Steiner Orienteering Problem}, the
{\em Steiner Capacitated Profitable Tour Problem}, and the {\em Steiner TSP with
Time Windows}. These are considered in the following three subsections.

\subsection{The Steiner Orienteering Problem}

We define the {\em Steiner Orienteering Problem}\/ (SOP) as follows. For
each $e \in E$, we are given a non-negative cost $c_e$. For each $i \in V_R$, we
are given a positive {\em revenue}\/ (or `prize') $p_i$. The nodes in $V_R$ do
not all have to be visited, but the revenue can only be collected from such a
node if that node is visited at least once. We are also given an upper bound $U$
on the total route cost. The task is to maximise the sum of the prizes collected,
subject to the upper bound.

Observe that Lemma \ref{le:once} applies to the SOP. To see this, let
$V^* \subset V_R$ be the set of nodes whose prizes are collected in the optimal
solution. The optimal solution is then also optimal for a STSP instance
defined on the same graph, but with $V_R$ set to $V^*$.

Knowing that Lemma \ref{le:once} applies, it is easy to adapt the classical
(non-compact) formulation of the STSP, presented in Subsection
\ref{sub:lit3}, to the SOP. For each $i \in V_R$, we define a new
binary variable $y_i$, taking the value $1$ if and only if the salesman
collects a prize from node $i$. We then change the objective function from
(\ref{eq:stsp-obj}) to:
\begin{equation} \label{eq:op-obj}
\max \sum_{i \in V_R} p_i y_i,
\end{equation}
replace the connectivity constraints (\ref{eq:con}) with:
\begin{equation} \label{eq:xycon}
\sum_{e \in \delta(S)} x_e \ge 2y_i \qquad
(i \in V_R, S \subseteq V \setminus \{1\}: i \in S),
\end{equation}
and add the route-cost constraint
\begin{equation} \label{eq:op-cost}
\sum_{e \in E} c_e x_e \le U.
\end{equation}

It is also easy to adapt the TS formulation of the STSP (Subsection
\ref{sub:newTS1}) to the SOP. It suffices to add the $y_i$ variables
mentioned above, change the objective function from (\ref{eq:newTS1}) to
(\ref{eq:op-obj}), add the route-cost constraint
\[
\sum_{k = 1}^{|A|} \sum_{a \in A} c_a r_a^k \le U,
\]
and replace the constraints (\ref{eq:newTS4}) with the constraints
\begin{equation} \label{eq:ts-xycon}
\sum_{k = 1}^{|A|} \sum_{a \in \delta^+(i)} r_a^k
\ge y_i \qquad (\forall i \in V_R).
\end{equation}
Moreover, Theorem \ref{thm:STSP-tstages-bd}, given in Subsection \ref{sub:newTS1},
applies to the SOP as well (for the same reason that Lemma \ref{le:once}
applies). So one can reduce the number of stages to $2(|V|-1)$, without losing any
optimal solutions.

It is also easy to adapt the MCF formulation of the STSP (Subsection
\ref{sub:newMCF}) to the SOP. It suffices to add the same $y_i$ variables,
change the objection function from (\ref{eq:sparseSCFobj}) to (\ref{eq:op-obj}),
change the right-hand sides of constraints (\ref{eq:MCFreq}) and (\ref{eq:fFlowIn})
from $1$ to $y_i$, change the right-hand sides of constraints (\ref{eq:fFlowOut})
from $-1$ to $-y_i$, and add the route-cost constraint:
\begin{equation} \label{eq:op-cost2}
\sum_{a \in A} c_a {\tilde x}_a \le U.
\end{equation}

As for the SCF formulation of the STSP (Subsection \ref{sub:newSCF1}),
there is an elegant way to adapt it to the SOP, which leads to an LP
relaxation with desirable properties. The key is to redefine the continuous
variables $g_a$, so that:
\begin{itemize}
\item if arc $a$ is traversed (i.e., ${\tilde x}_a = 1$), then $g_a$ represents
the total cost accumulated so far when the salesman begins to traverse the arc
\item if arc $a$ is not traversed (i.e., ${\tilde x}_a = 0$), then $g_a = 0$.
\end{itemize}
One this is done, one can introduce the same additional $y_i$ variables, and use
the objective function (\ref{eq:op-obj}), along with the following constraints:
\begin{eqnarray}
\label{eq:op1}
  & \sum_{a \in \delta^+(1)} \tilde{x}_a \ge 1 & \\
\label{eq:op2}
  & \sum_{a \in \delta^+(i)} \tilde{x}_a \ge y_i & (\forall i \in V_R) \\
\label{eq:op3}
  & \sum_{a \in \delta^+(i)} \tilde{x}_a = \sum_{a \in \delta^-(i)} \tilde{x}_a
  & (\forall i \in V)\\
\label{eq:op4}
  & \sum_{a \in \delta^+(i)} g_a - \sum_{a \in \delta^-(i)} g_a =
  \sum_{a \in \delta^-(i)} c_a \tilde{x}_a & (\forall i \in V \setminus \{1\}))\\
\label{eq:op5}
  & 0 \le g_a \le (U - c_a) \tilde{x}_a & (\forall a \in A)\\
\label{eq:op6}
  & \tilde{x}_a \in \{0,1\}             & (\forall a \in A)\\
\label{eq:op7}
  & y_i \in \{0,1\}                     & (\forall i \in V_R). 
\end{eqnarray}

We then have the following analogue of Theorem \ref{th:projection}:
\begin{proposition}
Let $({\tilde x}^*,g^*,y^*) \in [0,1]^{|A|} \times \RR_+^{|A|} \times [0,1]^{n_R}$
satisfy the constraints (\ref{eq:op1})--(\ref{eq:op5}). Let $x^* \in [0,2]^{|E|}$
be defined by setting $x^*_{ij} = {\tilde x}^*_{ij} + {\tilde x}^*_{ji}$ for
all $\{i,j\} \in E$. Then $(x^*,y^*)$ satisfies all of the following linear
inequalities:
\[
U \sum_{e \in \delta(S)} x_e \ge
\sum_{\{i,j\} \in E: \{i,j\} \cap S \ne \emptyset} c_e x_e \qquad
(\forall S \subset V \setminus \{1\}: S \cap V_R \ne \emptyset).
\]
\end{proposition}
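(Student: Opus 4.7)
The plan is to imitate the proof of Theorem~\ref{th:projection}: sum the cost-accumulation flow equations (\ref{eq:op4}) over the nodes of $S$, apply the upper bound (\ref{eq:op5}) on $g_a$, and then translate the resulting directed inequality back into the undirected $x$-space by means of (\ref{eq:op3}). Since $S \subset V \setminus \{1\}$, equation (\ref{eq:op4}) applies at every $i \in S$, so summing---and cancelling $g$-flows on arcs internal to $S$---gives
\[
\sum_{a \in \delta^+(S)} g_a \;-\; \sum_{a \in \delta^-(S)} g_a \;=\; \sum_{a:\,\mathrm{head}(a) \in S} c_a\, \tilde{x}_a.
\]
Using $g_a \le (U-c_a)\tilde{x}_a$ on $\delta^+(S)$ and $g_a \ge 0$ on $\delta^-(S)$, this rearranges to
\[
U \sum_{a \in \delta^+(S)} \tilde{x}_a \;\ge\; \sum_{a \in \delta^+(S)} c_a \tilde{x}_a \;+\; \sum_{a:\,\mathrm{head}(a) \in S} c_a \tilde{x}_a,
\]
and the two right-hand sums together account for each arc incident to $S$ exactly once: an internal arc via the ``head in $S$'' term, a cut arc leaving $S$ via $\delta^+(S)$, and a cut arc entering $S$ via the ``head in $S$'' term.

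Next I translate to edges. For every edge $e=\{i,j\}$ with $e \cap S \ne \emptyset$, both directed copies $(i,j),(j,i)$ are incident to $S$, so their contributions combine into $c_e(\tilde{x}^*_{ij}+\tilde{x}^*_{ji}) = c_e x^*_e$; hence the right-hand side becomes $\sum_{e:\, e\cap S \ne \emptyset} c_e x^*_e$. For the left-hand side, summing (\ref{eq:op3}) over $i \in S$ yields $\sum_{a \in \delta^+(S)} \tilde{x}^*_a = \sum_{a \in \delta^-(S)} \tilde{x}^*_a$, and therefore $\sum_{a \in \delta^+(S)} \tilde{x}^*_a = \tfrac{1}{2}\sum_{e \in \delta(S)} x^*_e$. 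Combining, I obtain $\tfrac{U}{2}\sum_{e \in \delta(S)} x^*_e \ge \sum_{e:\, e\cap S \ne \emptyset} c_e x^*_e$, which, since $c,x \ge 0$, implies the claimed inequality with room to spare (the derivation actually delivers the cut coefficient $U/2$ rather than $U$).

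The step I expect to require most care is the arc-to-edge bookkeeping in the paragraph above: one must check that every edge with at least one endpoint in $S$ gets both of its orientations recorded on the right-hand side without any double-counting, and that internal arcs contribute to the cost sum on the right but not to the cut-size sum on the left. Once that accounting is nailed down the rest is routine. I also note that the hypothesis $S \cap V_R \ne \emptyset$ plays no role in the derivation; it is presumably imposed simply because these are the cut sets of interest for the Steiner Orienteering Problem.
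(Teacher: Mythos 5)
Your proof is correct and takes essentially the same approach the paper intends: the paper's proof is given only as ``Similar to the proof of Theorem~\ref{th:projection}'', and your argument is precisely that adaptation --- sum the flow equations (\ref{eq:op4}) over the nodes of $S$, apply the bounds (\ref{eq:op5}) and nonnegativity, and project via (\ref{eq:op3}). Your careful arc-to-edge accounting in fact delivers the stronger cut coefficient $U/2$ (which implies the stated inequality since both sides are nonnegative), and you are also right that the hypothesis $S \cap V_R \ne \emptyset$ plays no role in the derivation.
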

\begin{proof}
Similar to the proof of Theorem \ref{th:projection}.
\end{proof}

As in the case of the STSP (Subsection \ref{sub:newSCF2}), it is
possible to strengthen this SCF formulation of the SOP. Indeed, if a
given arc $(i,j)$ is traversed, then the smallest value that $g_{ij}$ can take
is equal to the cost of the shortest path from the depot to node $i$. Similarly,
the largest value that $g_{ij}$ can take is equal to $U - c_{ij}$ minus the cost
of the shortest path from node $j$ to the depot. One can adjust the constraints
(\ref{eq:op5}) accordingly, and then derive a stronger projection result,
analogous to Theorem \ref{th:projection2}. We omit details, for the sake of
brevity.

\subsection{The Steiner Capacitated Profitable Tour Problem}\label{sub:SCPTP}

The {\em Steiner Capacitated Profitable Tour Problem}\/ (SCPTP) is
similar to the SOP, but with the following differences:
\begin{itemize}
\item We are given a positive {\em demand}\/ $q_i$ for each $i \in V_R$, in
addition to the revenue $p_i$.
\item If we wish to gain the revenue for a given $i \in V_R$, then we have to
deliver the demand $q_i$.
\item Instead of an upper bound $U$ on the route cost, we are given a vehicle
capacity $Q$, which does not exceed the sum of the demands. The total demand
of the serviced customers must not exceed $Q$.
\item The task is to find a tour of maximum total profit, where the profit
is defined as the sum of the revenues gained, minus the cost of the edges
traversed.
\end{itemize}

Observe that Lemma \ref{le:once} applies to the SCPTP, for the same
reason that it applies to the SOP. Then, one can easily adapt the
classical formulation of the STSP to the SCPTP. We use the same
binary variables $y_i$ as used in the previous subsection, change the objective
function from (\ref{eq:stsp-obj}) to
\[
\max \sum_{i\in V_R} p_i y_i - \sum_{e \in E} c_e x_e,
\]
replace the connectivity constraints (\ref{eq:con}) with the constraints
(\ref{eq:xycon}), and add the capacity constraint
\begin{equation} \label{eq:cptp-cap}
\sum_{i\in V_R} q_i y_i \le Q.
\end{equation}

One can adapt the TS formulation in a similar way. It suffices to add the same
$y_i$ variables, add the capacity constraint (\ref{eq:cptp-cap}), change the
objective function (\ref{eq:newTS1}) to
\[
\max \sum_{i \in V_R} p_i y_i - \sum_{k=1}^{|A|} \sum_{a \in A} c_a r_a^k, 
\]
and replace the constraints (\ref{eq:newTS4}) with the constraints
(\ref{eq:ts-xycon}). Moreover, Theorem \ref{thm:STSP-tstages-bd} is again
applicable, and one can reduce the number of stages to $2(|V|-1)$.

As for the SCF formulation, we propose again to redefine the continuous
variables $g_a$. Now, $g_a$ represents the total load (if any) that is carried
along the arc $a$. Then, again using the additional $y_i$ variables, it suffices
to:
\begin{equation} \label{eq:cptp-scf-obj}
\max \sum_{i\in V_R} p_i y_i - \sum_{a \in A} c_a \tilde{x}_a
\end{equation}
subject to the following constraints:
\begin{eqnarray}
\label{eq:Depleave}
  & \sum_{a \in \delta^+(1)} \tilde{x}_a \ge 1  & \\
  & \sum_{a \in \delta^+(i)} \tilde{x}_a \ge y_i & (\forall i \in V_R) \\
\label{eq:xFlow2}
  & \sum_{a \in \delta^+(i)} \tilde{x}_a = \sum_{a \in \delta^-(i)} \tilde{x}_a
  & (\forall i \in V)\\
\label{eq:weird-cap}
  & \sum_{a \in \delta^+(1)} g_a - \sum_{a \in \delta^-(1)} g_a \le Q & \\
\label{eq:gFlowReq2}
  & \sum_{a \in \delta^-(i)} g_a - \sum_{a \in \delta^+(i)} g_a = q_i y_i
  & (\forall i \in V_R)\\
\label{eq:gFlowNonReq2}
  & \sum_{a \in \delta^-(i)} g_a - \sum_{a \in \delta^+(i)} g_a = 0
  & (\forall i \in V \setminus (V_R \cup \{1\}))\\
& 0 \le g_a \le Q \tilde{x}_a & (\forall a \in A)\\
& \tilde{x}_a \in \{0,1\} & (\forall a \in A)\\
& y_i \in \{0,1\} & (\forall i \in V_R). 
\end{eqnarray}

The analogue of Theorem \ref{th:projection} is now as follows:
\begin{proposition}
Let $({\tilde x}^*,g^*,y^*) \in [0,1]^{|A|} \times \RR_+^{|A|} \times [0,1]^{n_R}$
satisfy (\ref{eq:Depleave})--(\ref{eq:gFlowNonReq2}). Let $x^* \in [0,2]^{|E|}$
be defined by setting $x^*_{ij} = {\tilde x}^*_{ij} + {\tilde x}^*_{ji}$ for
all $\{i,j\} \in E$. Then $(x^*,y^*)$ satisfies all of the following linear
inequalities:
\begin{equation} \label{eq:y-multistar}
\sum_{e \in \delta(S)} x_e \ge 2 \frac{\sum_{i \in S \cap V_R} q_i y_i}{Q}
\qquad (\forall S \subseteq V \setminus \{1\}: S \cap V_R \ne \emptyset).
\end{equation}
\end{proposition}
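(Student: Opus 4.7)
The proof should be essentially a direct analogue of Theorem~\ref{th:projection}, now with right-hand sides $q_i y_i$ in place of $1$ (and the scaling parameter $Q$ in place of $n_R-1$). My plan is to sum the flow-conservation equations over the node set $S$, use the bounds on $g$ to pass to a statement about $\tilde{x}$, and then convert arc-variables to edge-variables via the definition of $x^*$.

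First, I would sum equation~(\ref{eq:gFlowReq2}) over $i\in S\cap V_R$ and equation~(\ref{eq:gFlowNonReq2}) over $i\in S\setminus V_R$. Every arc with both endpoints in $S$ appears once with a $+$ sign and once with a $-$ sign, so these cancel, and the identity collapses to
\[
\sum_{a\in\delta^-(S)} g_a \;-\; \sum_{a\in\delta^+(S)} g_a \;=\; \sum_{i\in S\cap V_R} q_i y_i.
\]
Since this uses $S\subseteq V\setminus\{1\}$, no equation involving the depot is needed (so the capacity constraint~(\ref{eq:weird-cap}) plays no role in the projection, just as the corresponding depot inequality is not needed in Theorem~\ref{th:projection}).

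Next, I would apply the bounds $0\le g_a\le Q\tilde{x}_a$ to the left-hand side, upper-bounding $g_a$ on $\delta^-(S)$ by $Q\tilde{x}_a$ and lower-bounding $g_a$ on $\delta^+(S)$ by $0$, to obtain
\[
Q\sum_{a\in\delta^-(S)} \tilde{x}_a \;\ge\; \sum_{i\in S\cap V_R} q_i y_i.
\]
Then I would sum the degree-balance equations~(\ref{eq:xFlow2}) over $i\in S$ (again the internal arcs cancel) to get $\sum_{\delta^-(S)}\tilde{x}_a = \sum_{\delta^+(S)}\tilde{x}_a$, so that adding the two sides yields
\[
Q\sum_{a\in\delta^-(S)\cup\delta^+(S)} \tilde{x}_a \;\ge\; 2\sum_{i\in S\cap V_R} q_i y_i.
\]
Finally I would substitute $x^*_{ij}=\tilde{x}^*_{ij}+\tilde{x}^*_{ji}$, divide through by $Q$, and conclude the claimed inequality.

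There is no real obstacle here: the argument is structural and mirrors Theorem~\ref{th:projection} step for step, with the only new feature being that the right-hand side of the flow balance is $q_i y_i$ (rather than $1$), which is already non-negative so the bounding and telescoping arguments go through without modification. The proof can therefore be stated very briefly by reference to the earlier theorem.
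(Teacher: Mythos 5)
Your proposal is correct and is precisely the argument the paper intends: its own proof is just the one-line remark ``Similar to the proof of Theorem~\ref{th:projection}'', and your write-up fleshes out exactly that adaptation (sum the flow-balance equations over $S$, cancel internal arcs, apply the bounds $0 \le g_a \le Q\tilde{x}_a$, symmetrize via the degree-balance equations~(\ref{eq:xFlow2}), and substitute $x^*_{ij}=\tilde{x}^*_{ij}+\tilde{x}^*_{ji}$). Your observation that the capacity constraint~(\ref{eq:weird-cap}) is never needed, because $1 \notin S$, is also accurate and worth keeping.
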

\begin{proof}
Similar to the proof of Theorem \ref{th:projection}.
\end{proof}
Moreover, if one sums together the constraints
(\ref{eq:weird-cap})--(\ref{eq:gFlowNonReq2}), one obtains the capacity
constraint (\ref{eq:cptp-cap}). So the capacity constraint does not need to be
added to this SCF formulation.

As for the MCF formulation described in Subsection \ref{sub:newMCF}, we propose to
redefine the binary variables $f_a^k$ to be $1$ if and only if $q_k$ units of
commodity $k$ pass through arc $a$. Then, using the same additional $y_i$ variables,
it suffices to change the objection function to (\ref{eq:cptp-scf-obj}),
change the right-hand sides of constraints (\ref{eq:MCFreq}) and (\ref{eq:fFlowIn})
from $1$ to $y_i$, change the right-hand sides of constraints (\ref{eq:fFlowOut})
from $-1$ to $-y_i$, and add the constraints:
\[
\sum_{k \in V_R} q_k f_a^k \le Q {\tilde x}_a \qquad (\forall a \in A).
\]
It can be shown that the projection of this formulation into $(x,y)$ space
satisfies the inequalities (\ref{eq:xycon}) and (\ref{eq:y-multistar}), along with
the capacity constraint (\ref{eq:cptp-cap}). We omit the details for brevity.

\subsection{The Steiner TSP with Time Windows}

Finally, we define the {\em Steiner Traveling Salesman Problem with Time
Windows}\/ (STSPTW) as follows. As before, we are given a non-negative
cost $c_e$ each $e \in E$. For each $e \in E$, we are given a non-negative
{\em traversal time}\/ $t_e$. Moreover, for each $i \in V_R$, we are given a
non-negative {\em servicing time}\/ $s_i$, along with a {\em time window}\/
$[a_i, b_i]$. Finally, we are given a positive time $T$ by which the vehicle
must return to the depot. All nodes in $V_R$ must be visited at least once.
On one such visit, the customer must receive service. The time at which service
begins must lie between $a_i$ and $b_i$. The task is to minimise the cost of
the tour. We assume without loss of generality that the vehicle departs from
the depot at time zero. We also assume that the vehicle is permitted to wait
at any customer node, if it arrives at that node before service is due to begin.

Perhaps surprisingly, the situation here is completely different from those of
the previous two subsections. To be specific:
\begin{itemize}
\item Lemma \ref{le:once} does not apply. To see this, set $V = \{1, \ldots, 4\}$,
$V_R = \{2,3,4\}$ and $E = \{ \{1,2\}, \{1,3\}, \{2,4\} \}$, set
$c_e = t_e = 1$ for all $e \in E$, set $s_i = 1$ for $i \in \{2,3,4\}$, and set
$a_2 = b_2 = 1$, $a_3 = b_3 = 3$, and $a_4 = b_4 = 6$. The unique optimal solution
is for the salesman to service nodes $2, 3$ and $4$ in that order, and then return
to the depot. In this solution, the edge $\{1,2\}$ is traversed $4$ times.
\item Theorem \ref{thm:STSP-tstages-bd} does not apply either. In the same example,
the total number of edge traversals is $8$, whereas $2(|V|-1)$ is only 6.
\item In fact it is not even true that the total number of edge traversals is
bounded by $2|E|$, as the same example shows.
\item The only thing that one can say in general seems to be that the total
number of edge traversals is bounded by $(n_R+1)(|V|-1)$. (This is so since
the maximum number of edge traversals between two successive occasions of service,
or between a service and the vehicle leaving or returning to the depot, will
never exceed $|V|-1$ in an optimal solution.)
\end{itemize}
For these reasons, it does not seem possible to adapt the classical, SCF or MCF
formulations to the STSPTW, and it does not seem desirable to adapt the TS
formulation, since one would need $(n_R+1)(|V|-1)$ time stages.

On a more positive note, however, there exists a compact formulation of the STSPTW
that uses only ${\cal O}(n_R|E|)$ variables and constraints. The necessary
variable definitions are as follows. For every $a \in A$ and $k = 0, \ldots, n_R$,
let the binary variable $\tilde{x}_a^k$ take the value $1$ if and only if the
salesman traverses arc $a$ after having serviced exactly $k$ customers so far.
Also let $g_a^k$ be a non-negative continuous variable representing the total time
that has elapsed when the salesman starts to traverse arc $a$, having exactly
serviced $k$ customers, or $0$ if no such traversal occurs. Finally, for all
$i \in V_R$ and $k =1, \ldots, n_R$, let the binary variable $y_i^k$ take the
value $1$ if and only if customer $i$ is the $k$th customer to be serviced.

The objective function is simply:
\[
\min \sum_{k = 0}^{n_R} \sum_{a \in A} c_a \tilde{x}_a^k.
\]
To ensure that each required node is serviced exactly once, we have the following
constraints:
\begin{eqnarray*}
 & \sum_{k=1}^{n_R} y_i^k = 1 &  (\forall i \in V_R)\\ 
 & \sum_{i \in V_R} y_i^k  = 1 & (k=1, \ldots, n_R).
\end{eqnarray*}
To ensure that the vehicle departs from and returns to the depot a correct
number of times, we have:
\begin{eqnarray*}
 & \sum_{a \in \delta^+(1)}\tilde{x}_a^0 = 1  &\\
 & \sum_{a \in \delta^-(1)} \tilde{x}_a^k = \sum_{a \in \delta^+(1)} \tilde{x}_a^k 
   & (\forall k = 1, \ldots, n_R-1)\\
 & \sum_{a \in \delta^-(1)}\tilde{x}_a^{n_R} = 1.  &
\end{eqnarray*}
Then, to ensure that the vehicle departs from each non-depot node as many times as
it arrives, we have:
\begin{eqnarray*}
 & \sum_{a \in \delta^-(i)} \tilde{x}_a^0 =
   y_i^1 + \sum_{a \in \delta^+(i)}\tilde{x}_a^0  &(\forall i \in V_R)\\
 & y_i^k + \sum_{a \in \delta^-(i)} \tilde{x}_a^k =
   y_i^{k+1} + \sum_{a \in \delta^+(i)}\tilde{x}_a^k 
     &(\forall i \in V_R, \,  k = 1, \ldots, n_R-1)\\
 & y_i^{n_R} + \sum_{a \in \delta^-(i)} \tilde{x}_a^{n_R} =
   \sum_{a \in \delta^+(i)}\tilde{x}_a^{n_R} &(\forall i \in V_R)\\
 & \sum_{a \in \delta^-(i)} \tilde{x}_a^k = \sum_{a \in \delta^+(i)} \tilde{x}_a^k 
   & (\forall i\in V \setminus V_R \cup \{1\}, \, k = 0, \ldots, n_R).
\end{eqnarray*}
Next, to ensure that the $g_a^k$ variables take the value that they should, we
add the following constraint for $i \in V_R$ and for $k = 0, \ldots, n_R-1$:
\[
\sum_{a \in \delta^+(i)} g_a^{k+1} \ge \sum_{a \in \delta^-(i)} g_a^k
+ \sum_{a \in \delta^-(i)} t_a \tilde{x}_a^k + s_i y_i^{k+1},
\]
and the following constraint for $i \in V \setminus V_R$ and for
$k = 0, \ldots, n_R$:
\[
\sum_{a \in \delta^+(i)} g_a^k \ge \sum_{a \in \delta^-(i)} g_a^k
+ \sum_{a \in \delta^-(i)} t_a \tilde{x}_a^k.
\]
Moreover, to ensure that the time windows are obeyed, we add the following
constraints:
\begin{eqnarray*}
 & \sum_{a \in \delta^+(i)} g_a^k \ge (a_i+s_i) y_i^k
 & (\forall i \in V_R, k = 1, \ldots, n_R) \\
 & \sum_{a \in \delta^-(i)} g_a^{k-1} + \sum_{a \in \delta^-(i)} t_a \tilde{x}_a^{k-1}
 \le T - (T - b_i) y_i^k 
 & (\forall i \in V_R, \, k = 1, \ldots, n_R).
\end{eqnarray*}
Finally, we have the trivial constraints:
\begin{eqnarray*}
& \tilde{x}_a^k \in \{0,1\} & (\forall a \in A, \, k = 0, \ldots, n_R)\\
 & y_i^k \in \{0,1\} & (\forall i \in V_R, \, k = 1, \ldots, n_R)\\
 & 0 \le g_a^k \le T \tilde{x}_a^k & (\forall a \in A, \, k = 0, \ldots, n_R).
\end{eqnarray*}

As stated above, this formulation has ${\cal O}(n_R|E|)$ variables and
constraints. We leave the existence of a significantly smaller compact
formulation as an open question.

We remark that it is {\em not}\/ easy to convert the STSPTW into the
standard TSPTW by computing all-pairs shortest paths. This is because a
cheapest path between two nodes is not always the same as the quickest path.
We will address this issue in detail in another paper \cite{LS12}.

\section{Concluding Remarks} \label{se:conclusion}

Our motive for looking at the `Steiner' version of the TSP and its variants
was that many real-life vehicle routing problems are defined on road networks,
rather than complete graphs as normally assumed in the literature. Moreover,
`compact' formulations are of interest, not only for their elegance, but
also because one can just feed them into a standard branch-and-bound solver,
without having to implement complex solution methods such as branch-and-cut.

We have seen that the classical, single-commodity flow, multi-commodity flow
and time-staged formulations of the Traveling Salesman Problem can all be
adapted to the Steiner Traveling Salesman Problem, the Steiner Orienteering
Problem and the Steiner Capacitated Profitable Tour Problem. In some cases,
we can characterise the projections of the resulting LP relaxations into the
space of the `natural' variables. Moreover, in some cases, the formulations
can be easily strengthened, without increasing their size.

On the other hand, it does not seem possible to adapt the above formulations
to the Steiner Traveling Salesman Problem with Time Windows. Nevertheless,
we have produced a compact formulation of this problem which is of reasonable
size.

We believe that all of the formulations presented in this paper are potentially
of practical use. Possible topics for future research would be the derivation
of smaller and/or stronger compact formulations for the problems mentioned, the
derivation of useful compact formulations for the Steiner version of other
variants of the TSP, and exploring the potential of extending the approach
to problems with multiple vehicle and/or depots.

\end{document}